\newtheorem{theorem}{Theorem}
\newtheorem{lemma}{Lemma}
\newtheorem{corollary}{Corollary}
\begin{document}

\author{Skyler Simmons}
\address{Department of Mathematics, Utah Valley University, Orem, UT 84058}
\email{skyler.simmons@uvu.edu}

\title{The Eight-Body Cubic Collision-Based Periodic Orbit}

\begin{abstract}
We construct a highly-symmetric periodic orbit of eight bodies in three dimensions.  In this orbit, each body collides with its three nearest neighbors in a regular periodic fashion.  Regularization of the collisions in the orbit is achieved by an extension of the Levi-Civita method.  Initial conditions for the orbit are found numerically.  Linear stability of the orbit is then shown using a technique by Roberts.  Evidence toward higher-order stability is presented as an additional result of a numerical calculation.
\end{abstract}

\keywords{$n$-body problem, binary collision, regularization, linear stability}
\subjclass[2000]{Primary 70F16, Secondary 37N05, 37J25, 70F10}

\maketitle

\section{Introduction}
In the \textit{Principia Mathematica} (see \cite{bibNewton}), Newton gives mathematical equations governing the motion of point masses within their mutual gravitational field.  Specifically, for $n$ point masses in $\mathbb{R}^d$ located at $\mathbf{x}_i$ with mass $m_i$ for $i = 1, 2, ..., n$, we have that
\begin{equation}
\label{NewtonEquation}
m_i\ddot{\mathbf{x}}_i = \sum_{i \neq j} \frac{Gm_im_j}{|\mathbf{x}_i - \mathbf{x}_j|^2} \left(\frac{\mathbf{x}_i - \mathbf{x}_j}{|\mathbf{x}_i - \mathbf{x}_j|}\right).
\end{equation}
Here, the dot represents the derivative with respect to time, and $G$ is a constant.  In SI units, $G = 6.67430 \times 10^{-11} \text{m}^3 / \text{kg}^2\text{s}$)  A suitable choice of units gives $G = 1$, which is often assumed for mathematical simplicity. \\

\textit{Collision singularities} of the $n$-body problem occur when $\mathbf{x}_i = \mathbf{x}_j$ for some $i \neq j$.  Under suitable conditions, collisions of two bodies can be regularized.  \textit{Regularization} involves a change of temporal and spatial variables so that the collision point becomes a regular point for the differential equations.  Collision singularities have received a great deal of study.  Of particular note is a result by McGehee \cite{bibMcGehee1}, which shows that in general, a collision of three or more bodies cannot be regularized. \\

Many periodic orbits featuring collisions have been produced.  Existence, stability, and other properties of periodic orbits with three bodies in one spatial dimension are studied in both analytical and numerical contexts as early as 1956 in \cite{bibJS} and as recently as 2019 in \cite{bibKTXY}.  Works between these years include \cite{bibHE}, \cite{bibHM}, \cite{bibST1}, \cite{bibMoeckel}, \cite{bibVE}, \cite{bibST2}, \cite{bibST3}, \cite{bibShib}, \cite{bibYan3}, and \cite{bibOY}.  Orbits with four bodies in one spatial dimension are featured in \cite{bibShib}, \cite{bibMartinez}, \cite{bibHuang} and \cite{bibYan1}.  Orbits in two spatial dimensions featuring collisions were studied as early as 1979 in \cite{bibBroucke} and as recently as 2021 in \cite{bibSim}, with other notable works including \cite{bibRoySteves}, \cite{bibBOYSR}, \cite{bibSSS}, \cite{bibBOYS}, \cite{bibBMS}, \cite{bibOY3}, \cite{bibYan2}, and \cite{bibBS1}.  Additionally, in \cite{bibShib} and \cite{bibMartinez}, large families of highly-symmetric orbits are given in one, two and three dimensions, all of which can be expressed in two degrees of freedom. Additionally, three-dimensional restricted collision-based orbits are studied in \cite{bibMISC}, \cite{bibBV}, \cite{bibBDV}, and \cite{bibGPSV} as a case of the $e = 1$ Sitnikov problem, which can be reduced to a time-dependent two-degree-of-freedom problem.\\

This paper studies a three-degree of freedom, highly-symmetric, periodic orbit of eight bodies featuring collisions.  The bodies form the vertices of a rectangular prism at all points in time, with edges parallel to the standard coordinate axes in $\mathbb{R}^3$.  Each body collides with its three nearest neighbors in a regular periodic fashion.  This appears to be the first three-degree-of-freedom collision-based periodic orbit studied. \\

The remainder of the paper is as follows: In Section \ref{secSetting}, we set up and regularize the Hamiltonian that corresponds to the configuration being considered.  Section \ref{secPeriodicOrbit} details the construction of the periodic orbit.  We first  describe the orbit in the regularized setting.  Then, we analytically establish sufficient conditions for the orbit to exist.  Finally, we complete the existence proof with a numerical calculation. \\

Section \ref{secStabSim} establishes the linear stability of the orbit.  We first review some preliminary details of stability, including linear stability.  Next, we establish notation for the symmetries of the orbit.  We next detail some results by Roberts in \cite{bibRoberts8} that allow us to establish the linear stability of the orbit in a rigorous numerical fashion in terms of these symmetries.  Applications to the orbit under consideration are detailed after each result.  Finally, in Section \ref{secStabRes}, we give results of the numerical stability calculation established in the previous section, as well as some further numerical evidence of higher-order stability of the orbit. \\

\section{The Hamiltonian Setting and Regularization}
\label{secSetting}

\subsection{Configuration}
\label{subConfig}
We consider the Newtonian 8-body problem with point unit masses located at $(\pm q_1, \pm q_2, \pm q_3)$, where the choices of sign are taken independently of each other.  Although the symmetries of the position give the vertices of a rectangular prism, we will refer to this as the \textit{cubic configuration} (see Figure \ref{CubicConfig}). \\

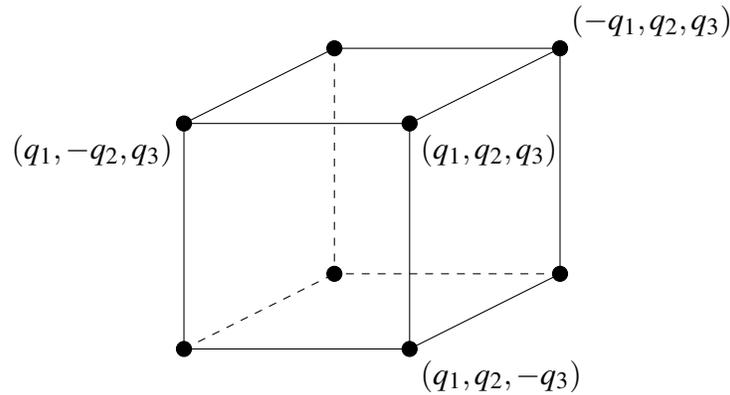
\begin{figure}[h]
\begin{tikzpicture}[scale = 1]
\draw[-] (0,0) -- (3,0) -- (3,3) -- (0,3) -- (0,0);
\draw[-] (5,1) -- (5,4) -- (2,4);
\draw[dashed] (0,0) -- (2,1);
\draw[dashed] (5,1) -- (2,1) -- (2,4);
\draw[-] (3,0) -- (5,1);
\draw[-] (3,3) -- (5,4);
\draw[-] (0,3) -- (2,4);
\draw[fill] (0,0) circle [radius = 0.1];
\draw[fill] (3,0) circle [radius = 0.1] node [below right] {$(q_1, q_2, -q_3)$}; 
\draw[fill] (0,3) circle [radius = 0.1] node [below left] {$(q_1, -q_2, q_3)$};
\draw[fill] (3,3) circle [radius = 0.1] node [below right] {$(q_1, q_2, q_3)$};
\draw[fill] (2,1) circle [radius = 0.1];
\draw[fill] (5,1) circle [radius = 0.1];
\draw[fill] (2,4) circle [radius = 0.1];
\draw[fill] (5,4) circle [radius = 0.1] node[above right] {$(-q_1, q_2, q_3)$};
\end{tikzpicture}
\caption{The cubic configuration.}
\label{CubicConfig}
\end{figure} 

Note that when $q_1 = 0$, if $q_2q_3 \neq 0$, then we have four pairs of bodies colliding in the $x = 0$ plane.  Similar results hold in the $y = 0$ and $z = 0$ planes by permuting the subscripts.  We seek an orbit possessing these four-pair collisions in the $x = 0$, $y = 0$, and $z = 0$ planes in a periodic fashion as pictured in Figure \ref{PeriodicOrbitFigure}. \\

\begin{figure}[h]
\includegraphics[scale=.5]{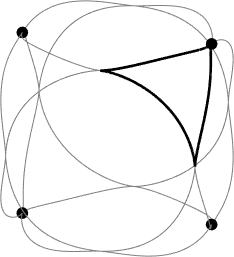} \quad \quad
\includegraphics[scale=.5]{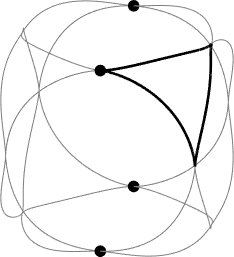} \quad \quad
\includegraphics[scale=.5]{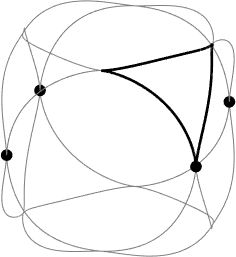}
\caption{The eight-body cubic collision orbit.  Four simultaneous binary collisions occur in the $x = 0$, $y = 0$, and $z = 0$ planes in turn as pictured.  For clarity, the trajectory of one of the eight bodies is highlighted.}
\label{PeriodicOrbitFigure}
\end{figure}

\subsection{The Hamiltonian Setting}
\label{subHamiltonian}
The potential energy is the sum of 28 terms.  For convenience, these are divided up into \textit{cube diagonals}, \textit{face diagonals}, and \textit{edges}. \\

\begin{figure}[h]
\begin{tikzpicture}[scale = 1]
\draw[-] [gray] (0,0) -- (3,0) -- (3,3) -- (0,3) -- (0,0);
\draw[-] [gray] (5,1) -- (5,4) -- (2,4);
\draw[dashed] [gray] (0,0) -- (2,1);
\draw[dashed] [gray] (5,1) -- (2,1) -- (2,4);
\draw[-] [gray] (3,0) -- (5,1);
\draw[-] [gray] (3,3) -- (5,4);
\draw[-] [gray] (0,3) -- (2,4);
\draw[-] (0,0) -- (5,4);
\draw[-] (3,0) -- (2,4);
\draw[-] (0,3) -- (5,1);
\draw[-] (3,3) -- (2,1);
\draw[fill] [gray] (0,0) circle [radius = 0.1];
\draw[fill] [gray] (3,0) circle [radius = 0.1]; 
\draw[fill] [gray] (0,3) circle [radius = 0.1];
\draw[fill] [gray] (3,3) circle [radius = 0.1];
\draw[fill] [gray] (2,1) circle [radius = 0.1];
\draw[fill] [gray] (5,1) circle [radius = 0.1];
\draw[fill] [gray] (2,4) circle [radius = 0.1];
\draw[fill] [gray] (5,4) circle [radius = 0.1];
\end{tikzpicture}
\caption{Cube diagonals (4 total).}
\label{CubeDiagonals}
\end{figure}
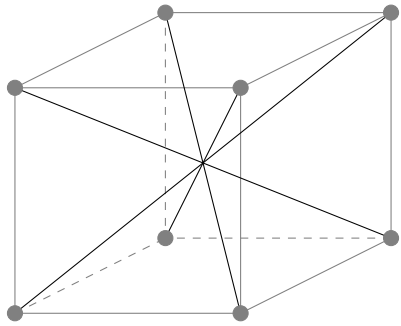 

\begin{figure}[h]
\begin{tikzpicture}[scale = 1]
\draw[-] [gray] (0,0) -- (3,0) -- (3,3) -- (0,3) -- (0,0);
\draw[-] [gray] (5,1) -- (5,4) -- (2,4);
\draw[dashed] [gray] (0,0) -- (2,1);
\draw[dashed] [gray] (5,1) -- (2,1) -- (2,4);
\draw[-] [gray] (3,0) -- (5,1);
\draw[-] [gray] (3,3) -- (5,4);
\draw[-] [gray] (0,3) -- (2,4);
\draw[-] (0,0) -- (3,3);
\draw[-] (0,3) -- (3,0);
\draw[-] (3,0) -- (5,4);
\draw[-] (3,3) -- (5,1);
\draw[-] (0,3) -- (5,4);
\draw[-] (3,3) -- (2,4);
\draw[fill] [gray] (0,0) circle [radius = 0.1];
\draw[fill] [gray] (3,0) circle [radius = 0.1]; 
\draw[fill] [gray] (0,3) circle [radius = 0.1];
\draw[fill] [gray] (3,3) circle [radius = 0.1];
\draw[fill] [gray] (2,1) circle [radius = 0.1];
\draw[fill] [gray] (5,1) circle [radius = 0.1];
\draw[fill] [gray] (2,4) circle [radius = 0.1];
\draw[fill] [gray] (5,4) circle [radius = 0.1];
\end{tikzpicture}
\caption{Face diagonals (12 total -- the remaining six are on the opposite faces of the cube).}
\label{FaceDiagonals}
\end{figure}
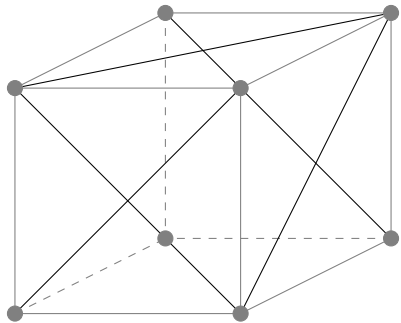 

Each of the four cube diagonals (see Figure \ref{CubeDiagonals}) contributes a term of the form
$$\frac{1}{\sqrt{(2q_1)^2 + (2q_2)^2 + (2q_3)^2}} = \frac{1}{2\sqrt{q_1^2 + q_2^2 + q_3^2}}.$$
Let $\mathcal{I} = \{1,2,3\}$.  Each face diagonal (see Figure \ref{FaceDiagonals}) contributes a term of the form
$$\frac{1}{\sqrt{(2q_i)^2 + (2q_j)^2}} = \frac{1}{2\sqrt{q_i^2 + q_j^2}},$$
with $i, j \in \mathcal{I}$ and $i \neq j$.  Specifically are four terms for each of the three possible choices of indices.
Lastly, each edge contributes a term of the form
$$\frac{1}{\sqrt{(2q_i)^2}} = \frac{1}{2q_i},$$
with $i \in \mathcal{I}$.  Again, for each index there are four terms.  Hence, the total potential energy of the system is
$$U = \frac{2}{\sqrt{q_1^2 + q_2^2 + q_3^2}} + \frac{2}{\sqrt{q_1^2 + q_2^2}} + \frac{2}{\sqrt{q_1^2 + q_3^2}} + \frac{2}{\sqrt{q_2^2 + q_3^2}} + \frac{2}{q_1} + \frac{2}{q_2} + \frac{2}{q_3}.$$

Let $p_i = \dot{q}_i$ denote the components of the momentum of the bodies.  The kinetic energy for the system is 
$$K = \frac{8\left(\sqrt{p_1^2 + p_2^2 + p_3^2}\right)^2}{2} = 4\left(p_1^2 + p_2^2 + p_3^2\right).$$
The Hamiltonian for the system is then given by $H = K - U$. \\

\subsection{Regularization}
\label{subReg}
We regularize the collisions that occur at $q_i = 0$ using an extension of the Levi-Civita method (see \cite{bibLeviCivita}).  Specifically, let
$$F = \sum_{i \in \mathcal{I}} \sqrt{q_i}P_i.$$
This generates a coordinate transformation given by
$$Q_i = \frac{\partial F}{\partial P_i} = \sqrt{q_i} \quad p_i = \frac{\partial F}{\partial q_i} = \frac{P_i}{2\sqrt{q_i}},$$
or
$$q_i = Q_i^2 \quad p_i = \frac{P_i}{2Q_i}.$$

In these coordinates, the potential energy for the system is given by
$$\tilde{U} = \frac{2}{\sqrt{Q_1^4 + Q_2^4 + Q_3^4}} + \frac{2}{\sqrt{Q_1^4 + Q_2^4}} + \frac{2}{\sqrt{Q_1^4 + Q_3^4}} + \frac{2}{\sqrt{Q_2^4 + Q_3^4}} + \frac{2}{Q_1^2} + \frac{2}{Q_2^2} + \frac{2}{Q_3^2}.$$
The new kinetic energy is given by
$$\tilde{K} = \frac{P_1^2}{Q_1^2} + \frac{P_2^2}{Q_2^2} + \frac{P_3^2}{Q_3^2}.$$
The new Hamiltonian is given by $\tilde{H} = \tilde{K} - \tilde{U}.$ \\

Lastly, to regularize the collisions at $Q_i = 0$, we apply a change of time satisfying
$$\frac{dt}{ds} = Q_1^2Q_2^2Q_3^2.$$
This gives the regularized Hamiltonian $\Gamma = \frac{dt}{ds}(\tilde{H} - E),$ or
\begin{align*}
\Gamma &= P_1^2Q_2^2Q_3^2 + Q_1^2P_2^2Q_3^2 + Q_1^2Q_2^2P_3^2 \\ 
&- \frac{2Q_1^2Q_2^2Q_3^2}{\sqrt{Q_1^4 + Q_2^4 + Q_3^4}} - \frac{2Q_1^2Q_2^2Q_3^2}{\sqrt{Q_1^4 + Q_2^4}} - \frac{2Q_1^2Q_2^2Q_3^2}{\sqrt{Q_1^4 + Q_3^4}} - \frac{2Q_1^2Q_2^2Q_3^2}{\sqrt{Q_2^4 + Q_3^4}} \\
&- 2Q_2^2Q_3^2 - 2Q_1^2Q_3^2 - 2Q_1^2Q_2^2 - EQ_1^2Q_2^2Q_3^2,
\end{align*}
where $E$ is the fixed energy of the system. \\

We now show that the system has been regularized as claimed.  Let $i, j, k \in \mathcal{I}$ be distinct.  Then, at the collision where $Q_i = 0$, $Q_j \neq 0$, and $Q_k \neq 0$, the condition $\Gamma = 0$ forces
\begin{equation}
\label{regularized}
P_i^2Q_j^2Q_k^2 - 2Q_j^2Q_k^2 = (P_i^2 - 2)Q_j^2Q_k^2 = 0.
\end{equation}
Then $P_i = \pm \sqrt{2}$.  Moreover, since 
\begin{equation}
\label{collisionCondition}
\dot{Q}_i = \frac{d\Gamma}{dP_i} = 2P_iQ_j^2Q_k^2
\end{equation}
then $\dot{Q}_i \neq 0$ when $Q_i = 0$.  Hence the orbit can be continued past the collision.  \\

An important feature of the regularization that can be determined from Equation \ref{collisionCondition} is that both $\dot{Q}_i$ and $P_i$ have the same sign at the collision time.  Since $P_i$ is continuous, then in an open time interval containing the collision the sign of $P_i$ does not change.  Hence, the sign of $\dot{Q}_i$ also does not change, so $Q_i$ must either pass from a negative to a positive value at collision, or from a positive to a negative one.

\section{The Periodic Orbit}
\label{secPeriodicOrbit}
\subsection{Description}
\label{subDescription}
The desired orbit passes through four simultaneous binary collisions in the $x = 0$, $y = 0$, and $z = 0$ planes in a periodic fashion, as pictured in Figure \ref{PeriodicOrbitFigure}.  In a physical sense, we start with the bodies with (non-regularized) positions given by
$$(\pm q_1, \pm q_2, \pm q_3) = (0, \omega, \omega)$$
and ending at
$$(\pm q_1, \pm q_2, \pm q_3) = (\omega, 0, \omega),$$
for some positive number $\omega$.  The proposed orbit will then be extended by a symmetry coinciding with a rotation of $120^\circ$ about the line $x = y = z$ in $\mathbb{R}^3$.  In other words, the orbit continues through a sequence of collisions 
$$(\pm q_1, \pm q_2, \pm q_3):(0, \omega, \omega) \to (\omega, 0, \omega) \to (\omega, \omega, 0) \to (0, \omega, \omega) \to \ldots,$$
with the collisions being equally-spaced in time. \\

In the regularized coordinates, the velocity components can also be defined.  Let $\gamma(s) = (Q_1(s), Q_2(s), Q_3(s), P_1(s), P_2(s), P_3(s))^T$.  At each collision time with $Q_i = 0$, the sign of $Q_i$ changes as noted at the end of Section \ref{subReg}.  Additionally, both $\gamma(s)$ and $-\gamma(s)$ correspond to the same setting in the original coordinates.  Hence, in the regularized setting, one period of the orbit passes through six collisions rather than three.  \\

\subsection{Extension by Symmetry}
\label{subExtension}

\begin{lemma}
\label{symmetryconstruct}
Suppose $\gamma(s)$ is a solution to the regularized Hamiltonian system $\Gamma$ that satisfies
\begin{align*}
\gamma(0) &= (0, \alpha, \alpha, \sqrt{2}, -\beta, \beta)^T \\
\gamma(2\tau) &= (\alpha, 0, \alpha, \beta, -\sqrt{2}, -\beta)^T.
\end{align*}
for some $\tau > 0$ and $E < 0$.  Then $\gamma(s)$ extends to a $12\tau$ periodic orbit for the system $\Gamma$.
\end{lemma}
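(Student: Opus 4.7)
I will exhibit two anti-symplectic involutions of $\Gamma$ that fix the endpoint states $A := \gamma(0)$ and $B := \gamma(2\tau)$ respectively. These produce reflection symmetries of $\gamma$ about $s = 0$ and $s = 2\tau$ through uniqueness of solutions. Their composition is then a symplectic, $\Gamma$-preserving automorphism of order three that translates the orbit by $4\tau$, giving $12\tau$-periodicity.

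\textbf{The involutions and their use.} The candidate at $A$ is
\[
\tilde{R}(Q_1, Q_2, Q_3, P_1, P_2, P_3) = (-Q_1, Q_3, Q_2, P_1, -P_3, -P_2),
\]
the composition of the symplectic sign flip $(Q_1, P_1) \mapsto (-Q_1, -P_1)$, the symplectic transposition $\pi_{23}$, and the anti-symplectic map $P \mapsto -P$. Because $\Gamma$ is symmetric in the indices and involves only $Q_i^2, Q_i^4, P_i^2$, one checks directly that $\tilde{R}$ preserves $\Gamma$, is anti-symplectic, satisfies $\tilde{R}^2 = \mathrm{id}$, and fixes $A$. The analog at $B$,
\[
\tilde{R}'(Q_1, Q_2, Q_3, P_1, P_2, P_3) = (Q_3, -Q_2, Q_1, -P_3, P_2, -P_1),
\]
has the same properties with $B$ in place of $A$. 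Since $s \mapsto \tilde{R}\gamma(-s)$ is a solution of $\Gamma$ agreeing with $\gamma$ at $s = 0$, uniqueness forces $\gamma(s) = \tilde{R}\gamma(-s)$ on the maximal interval of definition, and similarly $\gamma(s) = \tilde{R}'\gamma(4\tau - s)$. Combining these identities,
\[
\gamma(s + 4\tau) = \tilde{R}'\tilde{R}\gamma(s) =: M\gamma(s), \qquad M(Q,P) = (Q_2, -Q_3, -Q_1, P_2, -P_3, -P_1),
\]
and a brief computation gives $M^3 = \mathrm{id}$. Hence $\gamma(s + 12\tau) = M^3\gamma(s) = \gamma(s)$. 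In the physical coordinates $q_i = Q_i^2$, the map $M$ realizes the $120^\circ$ rotation about the axis $x = y = z$ mentioned in Section \ref{subDescription}, which is a consistency check on the construction.

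\textbf{Main obstacle.} The substantive task is identifying the correct involutions; everything else is mechanical. The form of $A = (0, \alpha, \alpha, \sqrt{2}, -\beta, \beta)$ essentially dictates $\tilde{R}$: the anti-symmetric pair $(P_2, P_3) = (-\beta, \beta)$ requires transposing indices $2$ and $3$ together with a $P$-sign flip, and the nonzero $P_1 = \sqrt{2}$ then forces the symplectic sign flip $(Q_1, P_1) \mapsto (-Q_1, -P_1)$ so that the anti-symplectic factor $P \mapsto -P$ does not reverse the sign of $P_1$. The form of $B$ yields $\tilde{R}'$ by the same reasoning with indices permuted cyclically. Once $\tilde{R}$ and $\tilde{R}'$ are in hand, verifying $M^3 = \mathrm{id}$ and extending $\gamma$ to a $12\tau$-periodic solution are routine.
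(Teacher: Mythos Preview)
Your argument is correct, but it takes a genuinely different route from the paper's. The paper works with a single \emph{time-preserving} (symplectic) symmetry $S_f$ of order~$6$: it checks directly that $\gamma(2\tau)=S_f\gamma(0)$, so by uniqueness $\gamma(s+2\tau)=S_f\gamma(s)$, and then iterates six times to obtain $\gamma(12\tau)=\gamma(0)$. You instead exploit \emph{time-reversing} (anti-symplectic) involutions $\tilde R,\tilde R'$ fixing the two endpoint states, reflect the orbit about $s=0$ and $s=2\tau$, and compose to get the symplectic symmetry $M=\tilde R'\tilde R$ of order~$3$ with shift $4\tau$. A quick check shows your $M$ equals the paper's $S_f^2$, so the two constructions are compatible.

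What each buys: the paper's approach is the shorter path to periodicity---one matrix, one uniqueness argument. Your approach does more work up front but yields more structure: the reflections $\gamma(s)=\tilde R\gamma(-s)$ and $\gamma(s)=\tilde R'\gamma(4\tau-s)$ are themselves time-reversing symmetries of the orbit of exactly the type the paper later introduces separately (the matrix $S_r$ in Section~\ref{subSym} is another such reversor, fixing the midpoint $\gamma(\tau)$ rather than the collision states). So your proof simultaneously establishes the reversible structure used in the stability analysis, at the cost of having to identify two involutions rather than one rotation.
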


\begin{proof}
We first establish the symmetries that will allow us to extend the orbit as claimed.  By direct calculation, we find that the equation for $\dot{Q}_i$ is negated under the transformation $P_i \mapsto -P_i$ and remains fixed under any sign change of the remaining variables.  We also find that $\dot{P}_i$ is negated under $Q_i \mapsto -Q_i$ and remains fixed under any other sign change of the remaining variables.  Furthermore, for any permutation $\sigma \in S_3$, since $\Gamma$ is fixed under permutation of the subscripts by $\sigma$, then the equation of motion for $\dot{Q}_i$ in terms of $Q_1, Q_2, Q_3, P_1, P_2, P_3$ is the same as that of $\dot{Q}_{\sigma(i)}$ in terms of $Q_{\sigma(1)}, Q_{\sigma(2)}, Q_{\sigma(3)}, P_{\sigma(1)}, P_{\sigma(2)}, P_{\sigma(3)}$.  Similar permutation results hold for $P_i$. \\

Consider the orbit with initial conditions $\gamma(2\tau)$.  By the symmetries just discussed, we have that
\begin{align*}
\dot{Q}_1(2\tau) &= \dot{Q}_3(0) \\
\dot{Q}_2(2\tau) &= -\dot{Q}_1(0) \\
\dot{Q}_3(2\tau) &= \dot{Q}_2(0) \\
\dot{P}_1(2\tau) &= \dot{P}_3(0) \\
\dot{P}_2(2\tau) &= -\dot{P}_1(0) \\
\dot{P}_3(2\tau) &= \dot{P}_2(0).
\end{align*}
Moreover, the equations of motion are the same as those on the interval $s \in [0, 2\tau]$ under permutations and sign changes as discussed above.  Existence and uniqueness of solutions to differential equations gives
\begin{align*}
Q_1(s + 2\tau) &= Q_3(s) \\
Q_2(s + 2\tau) &= -Q_1(s) \\
Q_3(s + 2\tau) &= Q_2(s) \\
P_1(s + 2\tau) &= P_3(s) \\
P_2(s + 2\tau) &= -P_1(s) \\
P_3(s + 2\tau) &= P_2(s)
\end{align*}
is a solution to the Hamiltonian system given by $\Gamma$.  Setting $s = 2\tau$, we have that
$$\gamma(4\tau) = (\alpha, -\alpha, 0, -\beta, -\beta, -\sqrt{2})^T.$$
Repeating the argument with initial conditions given by $\gamma(4\tau)$ gives
$$\gamma(6\tau) = (0, -\alpha, -\alpha, -\sqrt{2}, \beta, -\beta)^T.$$
Continuing in turn, we have that
\begin{align*}
\gamma(8\tau) &= (-\alpha, 0, -\alpha, -\beta, \sqrt{2}, \beta)^T \\
\gamma(10\tau) &=(-\alpha, \alpha, 0, \beta, \beta, \sqrt{2})^T \\
\gamma(12\tau) &=(0, \alpha, \alpha, \sqrt{2}, -\beta, \beta)^T.
\end{align*}
Since $\gamma(0) = \gamma(12\tau)$, the periodic orbit has been constructed as claimed.
\end{proof}

Physically, the orbit constructed in Lemma \ref{symmetryconstruct} corresponds to an orbit in which all bodies start in the $x = 0$ plane at collisions symmetrically placed along the lines $y = \pm z$.  The velocity of each body projected onto the $x = 0$ plane is orthogonal to the projection of its position.  The orbit then proceeds to collisions in the $y = 0$ and $z = 0$ planes with similarly symmetric positions and velocities. \\

\textbf{Note:} We do not rule out the possibility of the existence of a ``less symmetric'' orbit.  Indeed, the arguments in Lemma \ref{symmetryconstruct} give the same conclusion if we assume that
\begin{align*}
\gamma(0) &= (0, a, b, \sqrt{2}, -c, d)^T \\
\gamma(2\tau) &= (b, 0, a, d, -\sqrt{2}, -c)^T
\end{align*}
without the requirement that $a = b$ and $c = d$.  However, for simplicity we restrict ourselves to the ``reduced'' case at the present time.\\

\subsection{Numerical Results}
\label{subNumerICond}

Initial conditions for the orbit are found numerically using a shooting method.  Specifically, using initial conditions of the form $\gamma(0)$ as in Lemma \ref{symmetryconstruct} and letting $6\tau >0$ be the first time when $Q_1(s) = 0$, we find values of $\alpha$ and $\beta$ that minimize the value of $||\gamma(0) + \gamma(6\tau)||$, as we expect that $\gamma(6\tau) = -\gamma(0)$ from Lemma \ref{symmetryconstruct}.  Hence, a simple numerical minimization is an appropriate approach.  As $||\gamma(0) + \gamma(6\tau)||$ may have many local minima, we then verify that for these values of $\alpha$ and $\beta$ we have the conditions at $\gamma(2\tau)$ as specified in Lemma \ref{symmetryconstruct}.  The appropriate values satisfying these conditions with $E = -1$ are 
\begin{equation}
\alpha = 3.100685, \quad \beta = 0.668162.
\label{alphabeta}
\end{equation}
The full period of the regularized orbit is given by
\begin{equation}
12\tau = 0.124736
\label{periodlength}
\end{equation}

\begin{figure}[h]
\includegraphics[scale=.65]{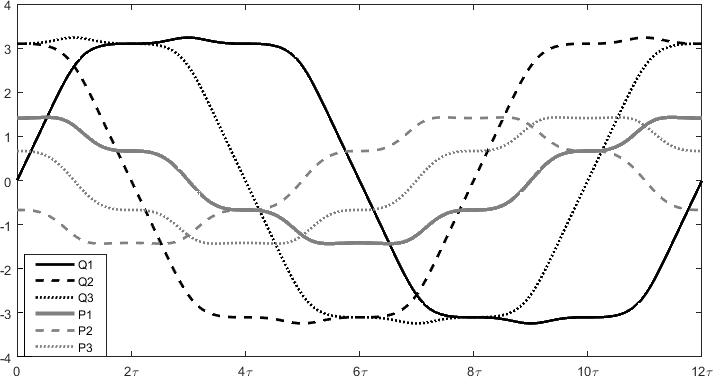}
\caption{Integration of the regularized equations of motion with the initial conditions in Section \ref{subNumerICond}.}
\label{RegularizedIntegration}
\end{figure}

\section{Stability and Symmetry}
\label{secStabSim}

\subsection{Definitions and Preliminaries}
\label{subStabDef}
Let $\mathcal{O}(\gamma_0)$ be the set of all points in $\mathbb{R}^6$ traced out in forward and backward time by the solution to the regularized Hamiltonian $\Gamma$ with initial conditions $\gamma_0$.  If we use the initial conditions determined by $\alpha$ and $\beta$ in the previous section, then the time interval $0 \leq s \leq 12\tau$ captures the entire orbit and $\mathcal{O}(\gamma_0)$ is a closed loop in $\mathbb{R}^6$.  This orbit is \textit{Poincar\'e stable} if given any $\epsilon > 0$ there is some $\delta > 0$ so that for initial conditions $\tilde{\gamma}_0$ with $|\tilde{\gamma}_0 - \gamma_0| < \delta$, then any point on the orbit $\mathcal{O}(\tilde{\gamma}_0)$ is within $\epsilon$ of a point on the orbit $\mathcal{O}(\gamma_0)$. \\

Poincar\'e stability is generally difficult to establish in all but the simplest cases.  However, there is a necessary condition that can be computed.  Specifically, for a Hamiltonian system with Hamiltonian $\Gamma$ and a periodic orbit $\gamma(s)$ with period $T$, consider the matrix differential equation
\begin{equation}
\label{LinearStabilityEquation}
X' = JD^2\Gamma(\gamma(s)), \quad X(0) = I
\end{equation}
where $D$ denotes the derivative, $J$ is the symplectic matrix
$$J = 
\begin{bmatrix}
0 & I \\
-I & 0
\end{bmatrix}$$
with $I$ and $0$ are appropriately sized identity and zero matrices.  Then the \textit{monodromy matrix} of the orbit is the matrix $X(T)$, and the orbit is \textit{linearly stable} if the eigenvalues of $X(T)$ all have complex modulus 1 and all have multiplicity one, apart from pairs of eigenvalues equal to 1 corresponding to first integrals of the system.  \\

Linear stability can be established by considering conditions other that $X(0) = I$ as well.  Specifically, if we let $X(0) = Y_0$ be the initial condition to Equation \ref{LinearStabilityEquation}, then $Y(s) = X(s)Y_0$, so $X(T) = Y(T)Y_0^{-1}$.  Hence, $Y_0^{-1}Y(T)$ is similar to the monodromy matrix $X(T)$, and linear stability can be determined from either matrix as similarity preserves eigenvalues. \\

In the cubic setting, our choice of coordinates has already forced the integrals corresponding to center of mass, net momentum, and angular momentum to be zero.  Hence the monodromy matrix corresponding to the periodic cubic orbit should contain one pair of eigenvalues 1 corresponding to the fixed value of the Hamiltonian.  Further, it will be shown that a particular choice of $Y_0$ simplifies the calculation.\\

\subsection{Symmetries of the Orbit}
\label{subSym}

A technique by Roberts allows us to further simplify this calculation by ``factoring'' the monodromy matrix in terms of the symmetries of the orbit.  For completeness, we review the relevant results below.  Full details of the proofs can be found in \cite{bibRoberts8}. \\

By construction of the orbit as given in Lemma \ref{symmetryconstruct}, we have that
$$\gamma(s + 2\tau) = S_f\gamma(s)$$
where
\begin{equation}
\label{sfequation}
S_f =
\left[
\begin{array}{ccc|ccc}
0 & 0 & 1 & 0 & 0 & 0 \\
-1 & 0 & 0 & 0 & 0 & 0 \\
0 & 1 & 0 & 0 & 0 & 0 \\
\hline
0 & 0 & 0 & 0 & 0 & 1 \\
0 & 0 & 0 & -1 & 0 & 0 \\
0 & 0 & 0 & 0 & 1 & 0
\end{array}
\right].
\end{equation}
So $S_f$ is a time-preserving symmetry of the orbit.  This symmetry corresponds to a $120^\circ$ rotation about the line $x = y = z$, coupled with an appropriate sign change which arises in the regularized setting. \\

A time-reversing symmetry of the orbit is given by
$$\gamma(-s + 2\tau) = S_r\gamma(s)$$
where
\begin{equation}
\label{srequation}
S_r =
\left[
\begin{array}{ccc|ccc}
0 & 1 & 0 & 0 & 0 & 0 \\
1 & 0 & 0 & 0 & 0 & 0 \\
0 & 0 & 1 & 0 & 0 & 0 \\
\hline
0 & 0 & 0 & 0 & -1 & 0 \\
0 & 0 & 0 & -1 & 0 & 0 \\
0 & 0 & 0 & 0 & 0 & -1
\end{array}
\right].
\end{equation}
This can be proven using a similar technique as shown in Lemma \ref{symmetryconstruct}.  Setting $s = \tau$ gives $\gamma(\tau) = S_r\gamma(\tau)$, implying that $\gamma(\tau)$ is an eigenvector of $S_r$ with eigenvalue 1.  Directly computing this, we find that $\gamma(\tau)$ must be of the form
$$\gamma(\tau) = (a, a, b, c, -c, 0)^T$$
for suitable values of $a$, $b$, and $c$.  \\

This eigenvector has an important physical interpretation.  Halfway between the collisions on the $x = 0$ and $y = 0$, the bodies lie in the planes $x = \pm y$, with velocities orthogonal to that plane.  Travel in either orthogonal direction along the periodic orbit produces the same trajectory up to reflection across the appropriate plane.  \\

Consider the portion of the periodic orbit from $s = 0$ to $s = \tau$, or from collision in the $x = 0$ plane up to all bodies lying on the $x = \pm y$  Coupled with the time-preserving symmetry noted earlier shows that the orbit in regularized space can then be broken into 12 copies of the orbit on $0 \leq s \leq \tau$.  The remainder of the orbit is then constructed by appropriate translation, rotation, and reflection operations.  \\

\textbf{Note:} This 12-fold symmetry is similar to that of the figure-eight orbit of Moore, Chenciner, and Montgomery (see \cite{bibMoore} and \cite{bibChencinerMontgomery}), which also has a time-preserving symmetry of order 6, and a time-reversing symmetry of order 2.  \\

\subsection{Robert's Symmetry-Reduction Technique}
\label{subRoberts}

The general results in this section are presented, with proof, in Section 2 of \cite{bibRoberts8}.  The results are included here for convenience.  The application of each result to the cubic orbit is given after each statement.  Results similar to Lemmas \ref{Wform}-\ref{KEig} also appear in \cite{bibRoberts8}, but the form presented in this section is specifically applied to the cubic orbit.

\begin{lemma}
\label{Roberts21}
Suppose that $\gamma(s)$ is a $T$-periodic solution of a Hamiltonian system with Hamiltonian $\Gamma$ and time-preserving symmetry $S$ such that
\begin{enumerate}
\item There exists some $N \in \mathbb{N}$ such that $\gamma(s + T/N) = S\gamma(s)$ for all $s$,
\item $\Gamma(Sx) = \Gamma(x)$
\item $SJ = JS$
\item $S$ is orthogonal
\end{enumerate}
Then the fundamental matrix solution $X(s)$ to the linearization problem $\dot{X} = JD^2\Gamma(\gamma(s))X$ with $X(0) = I$ satisfies
$$X(s + T/N) = SX(s)S^TX(T/N).$$
\end{lemma}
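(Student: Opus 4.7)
The plan is to verify that both sides of the claimed identity satisfy the same linear matrix ODE with the same initial condition, and then invoke uniqueness. Define $Y(s) = S X(s) S^T X(T/N)$; the goal is to show $Y(s) = X(s + T/N)$. At $s = 0$, orthogonality of $S$ gives $Y(0) = S S^T X(T/N) = X(T/N)$, which matches the right-hand side, so the real work is in checking that both functions solve the same linearization.

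The key preparatory computation transforms the Hessian of $\Gamma$ under $S$. Differentiating $\Gamma(Sx) = \Gamma(x)$ once and using $S^{-1} = S^T$ yields $\nabla \Gamma(Sx) = S\, \nabla \Gamma(x)$; differentiating a second time produces the conjugation relation
\[
D^2\Gamma(Sx) = S\, D^2\Gamma(x)\, S^T.
\]
Combining this with hypothesis (3) that $SJ = JS$ gives the central identity
\[
J\, D^2\Gamma(S\gamma(s)) = S\, J\, D^2\Gamma(\gamma(s))\, S^T.
\]

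With this in hand, the verification is direct. Using hypothesis (1), the shifted function $X(s + T/N)$ satisfies $\dot{X}(s + T/N) = J\, D^2\Gamma(S\gamma(s))\, X(s + T/N)$, and so the substitution $W(s) = S^T X(s + T/N)$, after applying the conjugation identity and cancelling $S^T S$, satisfies
\[
\dot{W}(s) = J\, D^2\Gamma(\gamma(s))\, W(s).
\]
This is exactly the matrix equation satisfied by $X(s)$. Since solutions of such a linear equation are given by $W(s) = X(s)\, W(0)$, I conclude $S^T X(s + T/N) = X(s)\, S^T X(T/N)$. Multiplying on the left by $S$ and using $S S^T = I$ delivers the lemma. (Equivalently, one may differentiate $Y(s)$ directly and verify it satisfies the same equation as $X(s + T/N)$ using the conjugation identity; both routes are symmetric.)

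The main obstacle is purely bookkeeping: one must apply the chain rule for the Hessian carefully so that the transformed matrix really is $S\, D^2\Gamma(x)\, S^T$ rather than the reverse, and then track which factor of $S$ migrates to which side of the product when $SJ = JS$ is invoked. Once the conjugation identity is recorded correctly, the uniqueness argument is immediate, and each of the four hypotheses on $S$ is used exactly once: (1) to identify $\gamma(s+T/N)$ with $S\gamma(s)$, (2) to obtain the Hessian transformation, (3) to swap $S$ past $J$, and (4) both to make $S^{-1} = S^T$ available and to match the initial conditions.
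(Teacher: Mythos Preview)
Your proof is correct and is precisely the standard argument. The paper itself does not actually prove this lemma but cites Roberts \cite{bibRoberts8} for the proof; the argument there proceeds exactly as you do, by showing that $S^T X(s+T/N)$ and $X(s)$ satisfy the same linear matrix ODE and then matching initial data, so your approach coincides with the source.
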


We note that the matrix $S = S_f$ from Equation \ref{sfequation} satisfies all of these hypotheses with $T = 12\tau$ and $N = 6$. \\

\begin{corollary}
\label{Roberts22}
Given the hypotheses of Lemma \ref{Roberts21}, the fundamental matrix solution $X(s)$ satisfies
$$X(kT/N) = S^k(S^TX(T/N))^k$$
for any $k \in \mathbb{N}$.
\end{corollary}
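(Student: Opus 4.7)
The plan is to prove this by induction on $k$, using Lemma \ref{Roberts21} as the engine. The identity in Lemma \ref{Roberts21} reads $X(s + T/N) = SX(s)S^T X(T/N)$, which is exactly a shift relation that increments the argument of $X$ by $T/N$. So if we know the formula for $X(kT/N)$, plugging $s = kT/N$ into the lemma immediately yields a formula for $X((k+1)T/N)$, and the algebra should collapse cleanly.

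For the base case $k=1$, I would write $X(T/N) = S \cdot S^T X(T/N) = S^1 (S^T X(T/N))^1$, where the first equality uses that $S$ is orthogonal (hypothesis (4) of Lemma \ref{Roberts21}), so $SS^T = I$.

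For the inductive step, assume the identity holds for some $k \in \mathbb{N}$. Applying Lemma \ref{Roberts21} with $s = kT/N$ gives
\[
X\bigl((k+1)T/N\bigr) = S \, X(kT/N) \, S^T X(T/N).
\]
Substituting the inductive hypothesis $X(kT/N) = S^k(S^T X(T/N))^k$ into the right-hand side yields
\[
X\bigl((k+1)T/N\bigr) = S \cdot S^k \bigl(S^T X(T/N)\bigr)^k \cdot S^T X(T/N) = S^{k+1}\bigl(S^T X(T/N)\bigr)^{k+1},
\]
completing the induction.

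There is really no substantive obstacle here; the only thing to be careful about is that the factor of $S$ produced by Lemma \ref{Roberts21} combines with $S^k$ on the left to give $S^{k+1}$, while the trailing $S^T X(T/N)$ attaches to the existing $(S^T X(T/N))^k$ to give $(S^T X(T/N))^{k+1}$. No commutativity of $S$ with $X(T/N)$ is needed, which is fortunate since such commutativity does not hold in general.
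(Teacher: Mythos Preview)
Your induction argument is correct and is exactly the standard proof of this corollary. The paper itself does not supply a proof here but cites Roberts \cite{bibRoberts8}; the argument there is the same induction on $k$ using Lemma~\ref{Roberts21} as the shift relation, so your approach matches.
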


In the case of the cubic orbit, this gives us that $X(12\tau) = (S_f^TX(2\tau))^6$, as $S_f^6 = I$.

\begin{lemma}
\label{Roberts24}
Suppose that $\gamma(s)$ is a $T$-periodic solution of a Hamiltonian system with Hamiltonian $\Gamma$ and time-reversing symmetry $S$ such that
\begin{enumerate}
\item There exists some $N \in \mathbb{N}$ such that $\gamma(-s + T/N) = S\gamma(s)$ for all $s$
\item $\Gamma(Sx) = \Gamma(x)$
\item $SJ = -JS$
\item $S$ is orthogonal.
\end{enumerate}
Then the fundamental matrix solution $X(s)$ to the linearization problem $\dot{X} = JD^2\Gamma(\gamma(s))X$ with $X(0) = I$ satisfies
$$X(-s + T/N) = SX(s)S^TX(T/N).$$
\end{lemma}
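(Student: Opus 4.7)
The plan is to mimic the proof strategy used for Lemma \ref{Roberts21}: construct two matrix-valued functions that satisfy the same linear matrix ODE with the same initial condition at $s=0$, then invoke uniqueness. Define
$$Y(s) = X(-s + T/N)\,X(T/N)^{-1}, \qquad Z(s) = S\,X(s)\,S^T.$$
Both satisfy $Y(0) = Z(0) = I$ (the first since evaluating at $s=0$ yields $X(T/N)X(T/N)^{-1}$, the second by orthogonality of $S$), so the proof reduces to showing that they obey the same linear ODE on a neighborhood of $0$.

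Before differentiating, I would first record the Hessian-invariance consequence of hypothesis (2): differentiating $\Gamma(Sx)=\Gamma(x)$ twice and using $S^{-1}=S^T$ yields the identity
$$D^2\Gamma(Sx) = S\, D^2\Gamma(x)\, S^T,$$
which is the workhorse of the whole argument. Combining this with the symmetry hypothesis $\gamma(-s+T/N) = S\gamma(s)$ lets me evaluate $D^2\Gamma$ along the orbit in terms of $D^2\Gamma(\gamma(s))$ sandwiched between $S$ and $S^T$.

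Now I would differentiate. For $Y(s)$, the chain rule contributes an extra minus sign from the time-reversing argument, giving
$$\dot Y(s) = -J\, D^2\Gamma(\gamma(-s+T/N))\, X(-s+T/N)\, X(T/N)^{-1} = -JS\, D^2\Gamma(\gamma(s))\, S^T Y(s).$$
For $Z(s)$, a straightforward calculation using the linearization equation gives
$$\dot Z(s) = SJ\, D^2\Gamma(\gamma(s))\, X(s)\, S^T = SJ\, D^2\Gamma(\gamma(s))\, S^T Z(s),$$
after noting that $X(s)S^T = S^T Z(s)$. The key step is then hypothesis (3), $SJ = -JS$: this converts the leading factor $SJ$ in $\dot Z$ into $-JS$, producing exactly the matrix coefficient appearing in $\dot Y$. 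Thus $Y$ and $Z$ solve the same linear ODE with the same initial condition, and by uniqueness $Y(s) = Z(s)$, which rearranges to the claimed identity $X(-s+T/N) = SX(s)S^T X(T/N)$.

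The main obstacle, and the one delicate point worth isolating in the write-up, is the sign bookkeeping: there is one minus sign from the chain rule (time reversal) and one from the anticommutation relation $SJ = -JS$, and the two must combine coherently. This is precisely the place where the time-reversing case differs from Lemma \ref{Roberts21}, where the analogous computation uses $SJ = JS$ with no minus sign from differentiation and so the two sign sources are both absent. Once the sign analysis is pinned down, the rest of the argument is a routine transcription of the time-preserving proof.
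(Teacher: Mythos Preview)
Your argument is correct: the two matrix functions $Y(s)=X(-s+T/N)X(T/N)^{-1}$ and $Z(s)=SX(s)S^T$ both equal $I$ at $s=0$ and both satisfy $\dot M=-JS\,D^2\Gamma(\gamma(s))\,S^T M$, so uniqueness gives the claimed identity. The paper itself does not supply a proof of this lemma but defers to Roberts \cite{bibRoberts8}; your write-up is exactly the standard argument found there, so there is nothing to add.
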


The matrix $S = S_r$ from Equation \ref{srequation} satisfies all of these hypotheses with $T = 12\tau$ and $N = 6$.

\begin{corollary}
\label{Roberts25}
Given the hypotheses of Lemma \ref{Roberts24}, 
$$X(T/N) = SA^{-1}S^TA, \quad A = X(T/2N).$$
\end{corollary}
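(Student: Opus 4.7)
The plan is to apply Lemma \ref{Roberts24} at a carefully chosen value of $s$ and then solve algebraically for $X(T/N)$, using orthogonality of $S$ to simplify the inverse.

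Specifically, the identity
$$X(-s + T/N) = SX(s)S^TX(T/N)$$
from Lemma \ref{Roberts24} is a relation between values of the fundamental matrix at $s$ and at $T/N - s$. The one substitution that collapses this into a closed relation involving only a single unknown value of $X$ is $s = T/(2N)$, since then $-s + T/N = s$. First I would set $s = T/(2N)$ and write $A = X(T/(2N))$, obtaining
$$A = S A S^T X(T/N).$$

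Next I would isolate $X(T/N)$ by left-multiplying by $(S A S^T)^{-1}$, giving $X(T/N) = (S A S^T)^{-1} A$. The final step is to simplify this inverse using hypothesis~(4) of Lemma \ref{Roberts24}: since $S$ is orthogonal, $S^T = S^{-1}$, so
$$(S A S^T)^{-1} = (S^T)^{-1} A^{-1} S^{-1} = S A^{-1} S^T,$$
which yields $X(T/N) = S A^{-1} S^T A$ as claimed. One should verify in passing that $A = X(T/(2N))$ is invertible, but this is automatic because $X(s)$ is a fundamental matrix solution of a linear ODE, hence nonsingular for all $s$.

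There is no substantive obstacle here; the whole argument is a one-line substitution followed by inversion, and the only place where a hypothesis is actively used (beyond invoking Lemma \ref{Roberts24} itself) is orthogonality of $S$ to rewrite $S^{-1}$ as $S^T$ in the final form.
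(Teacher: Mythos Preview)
Your proof is correct and is exactly the intended argument: substitute $s = T/(2N)$ into the identity of Lemma \ref{Roberts24}, so that both sides involve $A = X(T/(2N))$, and then invert using orthogonality of $S$. The paper does not give its own proof of this corollary but cites \cite{bibRoberts8}, where the same one-line substitution is used.
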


In the case of the cubic orbit, noting that $S_r^T = S_r$ gives $X(2\tau) = S_rA^{-1}S_rA$ with $A = X(\tau)$.  Combining this with the earlier result, this gives us that the monodromy matrix of the cubic orbit is $X(12\tau) = (S_f^TS_rA^{-1}S_rA)^6$.  Hence, we can evaluate the stability of the orbit by evaluating the relevant differential equations along only a twelfth of the orbit. \\

Roberts also gives similar results for the case where the initial conditions given in Equation \ref{LinearStabilityEquation} are not the identity matrix.  These are listed below.

\begin{corollary}
\label{RobertsRemark1}
If $Y(s)$ is the fundamental matrix solution with $X(0) = Y_0$, then
$$Y(s + T/N) = SY(s)Y_0^{-1}S^TY(T/N),$$
and so
$$Y(kT/N) = S^kY_0(Y_0^{-1}S^TY(T/N))^k$$
\end{corollary}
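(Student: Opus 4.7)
The plan is to leverage the linearity of the matrix ODE in Equation \ref{LinearStabilityEquation} to transfer the identities of Lemma \ref{Roberts21} and Corollary \ref{Roberts22} directly to the solution $Y(s)$ with arbitrary invertible initial data $Y_0$. Since $\dot{Z} = JD^2\Gamma(\gamma(s))Z$ is linear in $Z$ and $Y_0$ is constant, the unique fundamental matrix solution with initial condition $Y_0$ is simply $Y(s) = X(s)Y_0$, where $X(s)$ is the identity-initialized solution appearing in Lemma \ref{Roberts21}. Equivalently, $X(s) = Y(s)Y_0^{-1}$, which requires only that $Y_0$ be invertible (which is implicit in calling $Y(s)$ a fundamental matrix solution).

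For the first identity, I would substitute $X(s) = Y(s)Y_0^{-1}$ into the conclusion $X(s + T/N) = SX(s)S^TX(T/N)$ of Lemma \ref{Roberts21}. This gives
$$Y(s + T/N)Y_0^{-1} = S\bigl(Y(s)Y_0^{-1}\bigr)S^T\bigl(Y(T/N)Y_0^{-1}\bigr),$$
and multiplying on the right by $Y_0$ clears the outermost inverse factor and yields the asserted identity
$$Y(s + T/N) = SY(s)Y_0^{-1}S^TY(T/N).$$

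For the second identity, I would proceed by induction on $k$. The base case $k = 1$ reduces to the tautology $Y(T/N) = S Y_0 Y_0^{-1} S^T Y(T/N) = Y(T/N)$, using $Y(0) = Y_0$ together with the orthogonality $SS^T = I$ from hypothesis (4) of Lemma \ref{Roberts21}. For the inductive step, one applies the first identity at $s = kT/N$ and substitutes the induction hypothesis $Y(kT/N) = S^k Y_0 (Y_0^{-1}S^TY(T/N))^k$; the adjacent $Y_0$ and $Y_0^{-1}$ factors at the junction cancel and the expression collapses to $S^{k+1}Y_0(Y_0^{-1}S^TY(T/N))^{k+1}$, as desired.

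This argument poses no real obstacle: the linearity of the ODE reduces the entire proof to bookkeeping of the $Y_0$ factors on the right. The only subtlety is the implicit hypothesis that $Y_0$ be invertible, which is standard for fundamental matrix solutions and is built into the statement by the appearance of $Y_0^{-1}$ in both conclusions.
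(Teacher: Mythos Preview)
Your argument is correct. The paper does not actually supply a proof of this corollary; it is quoted from \cite{bibRoberts8} without proof, and your derivation via $Y(s)=X(s)Y_0$ together with Lemma~\ref{Roberts21} (and induction for the iterated form) is exactly the intended route.
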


\begin{corollary}
\label{RobertsRemark2}
If $Y(s)$ is the fundamental matrix solution with $X(0) = Y_0$, then
$$Y(-s + T/n) = SY(s)Y_0^{-1}S^TY(T/N),$$
and so
$$Y(T/N) = SY_0B^{-1}S^TB, \quad B = Y(T/2N)$$
\end{corollary}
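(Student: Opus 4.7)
The plan is to derive this corollary directly from Lemma \ref{Roberts24} by translating the result for $X(s)$ (the fundamental solution with $X(0)=I$) into a result for $Y(s)$ using the relation $Y(s)=X(s)Y_0$. Since $Y_0$ is invertible (as the initial value of a fundamental matrix solution), this substitution is harmless and will let every instance of $X$ in Lemma \ref{Roberts24} be rewritten in terms of $Y$.

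First, I would write $X(s) = Y(s)Y_0^{-1}$ and apply Lemma \ref{Roberts24}, which gives
\[
X(-s + T/N) = SX(s)S^T X(T/N).
\]
Multiplying both sides on the right by $Y_0$ yields $Y(-s+T/N)$ on the left, while the right side becomes
\[
S\bigl(Y(s)Y_0^{-1}\bigr)S^T\bigl(Y(T/N)Y_0^{-1}\bigr)Y_0 \;=\; SY(s)Y_0^{-1}S^T Y(T/N),
\]
establishing the first displayed identity.

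For the second identity, I would substitute $s = T/(2N)$ into the formula just obtained. Then $-s+T/N = T/(2N)$, so with $B = Y(T/(2N))$ the identity becomes
\[
B = SBY_0^{-1}S^T Y(T/N).
\]
Multiplying on the left by $S^T$ gives $S^T B = BY_0^{-1}S^T Y(T/N)$, then multiplying on the left by $B^{-1}$ yields $B^{-1}S^T B = Y_0^{-1}S^T Y(T/N)$. Multiplying on the left by $Y_0$ and then by $S$ produces
\[
Y(T/N) = S Y_0 B^{-1} S^T B,
\]
as claimed.

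There is no substantive obstacle here beyond bookkeeping: the result is a routine lift of Corollary \ref{Roberts25} from the identity initial condition to a general invertible initial condition $Y_0$, and the only point requiring care is the order of multiplications when clearing $Y_0$ and $B$ from the matrix equation. Invertibility of $B$ is automatic since $B$ is the value at $s = T/(2N)$ of a fundamental matrix solution, and invertibility of $Y_0$ holds by hypothesis.
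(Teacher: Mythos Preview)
Your argument is correct and is exactly the natural one: the paper itself does not supply a proof of this corollary but defers to \cite{bibRoberts8}, and the standard derivation there proceeds just as you do, by writing $Y(s)=X(s)Y_0$, substituting into Lemma~\ref{Roberts24}, and then specializing to $s=T/(2N)$ to solve for $Y(T/N)$. The only implicit ingredient you use is that $S$ is orthogonal (so $S^{-1}=S^T$), which is hypothesis~(4) of Lemma~\ref{Roberts24}; you might mention this explicitly where you multiply on the left by $S^T$ and later by $S$.
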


Combining these with previous results gives that for an arbitrary $X(0) = Y_0$, the resulting matrix solution $Y(s)$ to Equation \ref{LinearStabilityEquation} satisfies
$$Y(12\tau) = Y_0(Y_0^{-1}S_f^TS_rY_0B^{-1}S_rB)^6$$
so
$$X(12\tau) = Y_0(Y_0^{-1}S_f^TS_rY_0B^{-1}S_rB)^6Y_0^{-1},$$
where $B = Y(\tau)$. \\

Define $W = Y_0^{-1}S_f^TS_rY_0B^{-1}S_rB$.  Then $X(12\tau) = Y_0W^6Y_0^{-1}$, and stability of the cubic orbit is thus reduced to determining the eigenvalues of $W$. \\

For a properly chosen initial condition matrix $Y_0$, some additional simplification of the calculation can be done.  Again from \cite{bibRoberts8},
\begin{lemma}
Suppose that $W$ is a symplectic matrix satisfying
$$\frac{1}{2}(W + W^{-1}) =
\begin{bmatrix}
K & 0 \\
0 & K^T
\end{bmatrix}.$$
Then $W$ has all eigenvalues on the unit circle if and only if the eigenvalues of $K$ lie in the real interval $[-1, 1]$.
\label{Wform}
\end{lemma}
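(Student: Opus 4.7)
The plan is to reduce the spectral condition for $W$ to a scalar equivalence between a complex number $\lambda$ lying on the unit circle and the quantity $\tfrac{1}{2}(\lambda + \lambda^{-1})$ lying in $[-1,1]$. The mechanism for the reduction is the reciprocal pairing of eigenvalues of a symplectic matrix: because $W$ is symplectic, its characteristic polynomial is reciprocal, so $\lambda$ is an eigenvalue of $W$ iff $\lambda^{-1}$ is. Hence the spectrum of $W$ can be listed as $\lambda_1, \lambda_1^{-1}, \ldots, \lambda_n, \lambda_n^{-1}$ (with the usual care when $\lambda = \pm 1$).

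First I would apply the spectral mapping theorem to the Laurent polynomial $z \mapsto \tfrac{1}{2}(z + z^{-1})$: the eigenvalues of $\tfrac{1}{2}(W + W^{-1})$ are precisely the numbers $\mu_i = \tfrac{1}{2}(\lambda_i + \lambda_i^{-1})$, with each reciprocal pair $\{\lambda_i, \lambda_i^{-1}\}$ collapsing to a single $\mu_i$ of doubled algebraic multiplicity. Next I would read off the spectrum of the block-diagonal right-hand side. Since $K$ and $K^T$ have identical characteristic polynomials, the eigenvalues of $\operatorname{diag}(K, K^T)$ are exactly the eigenvalues of $K$, each with doubled multiplicity. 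The assumed identity therefore forces the multiset of $\mu_i$'s to coincide with the spectrum of $K$.

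Finally, I would establish the scalar biconditional: for any nonzero $\lambda \in \mathbb{C}$, one has $|\lambda| = 1$ if and only if $\mu := \tfrac{1}{2}(\lambda + \lambda^{-1}) \in [-1,1]$. The forward direction writes $\lambda = e^{i\theta}$ and produces $\mu = \cos\theta$. For the converse, given $\mu$, the reciprocal pair $\{\lambda, \lambda^{-1}\}$ consists of the roots of $\lambda^2 - 2\mu\lambda + 1 = 0$, namely $\mu \pm \sqrt{\mu^2 - 1}$; when $\mu \in [-1,1]$ these are complex conjugates whose product is $1$, hence both of modulus $1$, while when $|\mu| > 1$ they form a real reciprocal pair in which one entry necessarily has modulus greater than one. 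Assembling the three steps, ``every eigenvalue of $W$ lies on the unit circle'' is equivalent to ``every $\mu_i$ lies in $[-1,1]$'', which is equivalent to ``every eigenvalue of $K$ lies in $[-1,1]$''.

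The main obstacle is the bookkeeping around degenerate cases: reciprocal pairs collapse when $\lambda = \pm 1$, and the algebraic and geometric multiplicities of eigenvalues on the unit circle need not agree. Since the lemma concerns only the \emph{location} of eigenvalues and not their Jordan structure, this can be handled by working throughout with characteristic polynomials rather than eigenvector decompositions, so that the two-to-one correspondence between eigenvalues of $W$ and of $\tfrac{1}{2}(W+W^{-1})$ exactly matches the doubled multiplicity coming from the block-diagonal form.
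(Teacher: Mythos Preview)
Your argument is correct. The reduction via the Joukowski map $z\mapsto\tfrac{1}{2}(z+z^{-1})$, together with the symplectic reciprocal pairing of the spectrum of $W$ and the fact that $K$ and $K^T$ share a characteristic polynomial, gives exactly the biconditional claimed. Your handling of the scalar step is also right: the only preimages of the real segment $[-1,1]$ under the Joukowski map are points of the unit circle, so no separate treatment of complex $\mu$ is needed. The multiplicity bookkeeping you flag in the last paragraph is in fact unnecessary here, since the lemma speaks only of the \emph{location} of eigenvalues; set-level spectral mapping already suffices.

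As for comparison with the paper: the paper does not supply its own proof of this lemma. It is quoted from Roberts \cite{bibRoberts8} (see the phrase ``Again from \cite{bibRoberts8}'' immediately preceding the statement), and the surrounding text moves directly to showing that a suitable $Y_0$ produces a $W$ of the required block form. So there is no in-paper argument to compare against; your proof simply fills in what the paper defers to the reference, and it does so along the standard lines one would expect.
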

Proper choice of the matrix $Y_0$ will give $W$ of the required form.

\begin{lemma}
Setting $\delta = \sqrt{2}/2$ and 
\begin{equation}
Y_0 = 
\left[
\begin{array}{ccc|ccc}
1 & 0 & 0 & 0 & 0 & 0 \\
0 & 0 & -\delta & 0 & \delta & 0 \\
0 & 0 & \delta & 0 & \delta & 0 \\
\hline
0 & 0 & 0 & 1 & 0 & 0 \\
0 & -\delta & 0 & 0 & 0 & -\delta \\
0 & -\delta & 0 & 0 & 0 & \delta
\end{array}
\right]
\label{matrixy0}
\end{equation}
gives a matrix $W$ of the form in Lemma \ref{Wform}.
\end{lemma}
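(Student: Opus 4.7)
My plan is to factor $W$ as a product of two anti-symplectic involutions and read the required block structure off an anticommutator. Set $P := Y_0^{-1}S_f^T S_r Y_0$ and $Q := B^{-1}S_r B$, so that $W = PQ$. First, I would check that $S_r^2 = I$ and $(S_f^T S_r)^2 = I$ directly from the matrices in \eqref{sfequation}--\eqref{srequation} (each reduces to a single $3\times 3$ squaring because of the block-diagonal structure); conjugation then gives $P^2 = Q^2 = I$. The relations $S_f J = J S_f$ and $S_r J = -J S_r$ make $S_r$ and $S_f^T S_r$ anti-symplectic, and provided $Y_0$ is symplectic, so is $B = X(\tau) Y_0$, so $P$ and $Q$ are genuine anti-symplectic involutions. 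Because $P^{-1} = P$ and $Q^{-1} = Q$, this yields $W^{-1} = QP$, hence
$$\frac{1}{2}(W + W^{-1}) \;=\; \frac{1}{2}(PQ + QP).$$

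The decisive step is to verify that $Y_0$ is symplectic and to compute $P$ explicitly. Writing $Y_0$ in the $3\times 3$ block form $\bigl[\begin{smallmatrix} U_1 & U_2 \\ -U_2 & U_1 \end{smallmatrix}\bigr]$ read off from \eqref{matrixy0}, the condition $Y_0^T J Y_0 = J$ reduces to $U_1^T U_1 + U_2^T U_2 = I$ together with $U_1^T U_2 = U_2^T U_1$, both one-line checks given $\delta^2 = 1/2$; the stronger relations $U_1 U_1^T + U_2 U_2^T = I$ and $U_1 U_2^T = 0$ also hold, making $Y_0$ orthogonal so $Y_0^{-1} = Y_0^T$. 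A short computation then gives $S_f^T S_r = \bigl[\begin{smallmatrix} M & 0 \\ 0 & -M \end{smallmatrix}\bigr]$ for an explicit involutory $3\times 3$ signed-permutation matrix $M$, and conjugating by $Y_0$ collapses, via the cancellations $U_1^T M U_2 = U_2^T M U_1 = 0$ and $U_1^T M U_1 - U_2^T M U_2 = -I$, to
$$P \;=\; \begin{bmatrix} -I_3 & 0 \\ 0 & I_3 \end{bmatrix}.$$
This is the heart of the argument: the precise entries of $Y_0$ are engineered so that this diagonal $\pm I$ form emerges. I expect this to be the main obstacle, because the symplectic and orthogonality requirements strongly constrain $Y_0$ and any perturbation of its entries destroys the block cancellations above.

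Once $P$ has this diagonal form the rest is formal. Writing $Q = \bigl[\begin{smallmatrix} Q_{11} & Q_{12} \\ Q_{21} & Q_{22} \end{smallmatrix}\bigr]$, the $\pm I$ blocks of $P$ give
$$PQ + QP \;=\; \begin{bmatrix} -2Q_{11} & 0 \\ 0 & 2Q_{22} \end{bmatrix},$$
which is already block diagonal. To identify the lower-right block as the transpose of the upper-left, I use that $Q$ being anti-symplectic and involutive is equivalent to $Q^T J = -J Q$; expanding this identity in $3 \times 3$ blocks gives $Q_{22} = -Q_{11}^T$ (and incidentally $Q_{12}$, $Q_{21}$ symmetric, which is not needed here). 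Setting $K := -Q_{11}$ then delivers
$$\frac{1}{2}(W + W^{-1}) \;=\; \begin{bmatrix} K & 0 \\ 0 & K^T \end{bmatrix},$$
matching the hypothesis of Lemma~\ref{Wform}. Notice that $B$ never needs to be known explicitly: all of the orbit-dependent information is packaged into $Q_{11}$, while the engineered form of $Y_0$ alone forces the required block-diagonal shape.
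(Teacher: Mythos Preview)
Your argument is correct and follows essentially the same route as the paper: factor $W$ as a product of two involutions, verify by direct computation that the choice of $Y_0$ reduces the first factor $Y_0^{-1}S_f^TS_rY_0$ to the diagonal $\pm I$ form (the paper writes this as $-\Lambda$), and then use the symplectic structure of $B$ together with the block form of $S_r$ to read off the required relation between the diagonal blocks of the second factor. The only cosmetic difference is that you extract $Q_{22}=-Q_{11}^T$ from the single identity $Q^TJ=-JQ$, whereas the paper obtains the same block structure by expanding $-B^{-1}S_rB$ via the symplectic inverse formula for $B$; these are two phrasings of the same computation.
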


\begin{proof}
Let
\begin{equation*}
\Lambda =
\begin{bmatrix}
I & 0 \\
0 & -I \\
\end{bmatrix}
\end{equation*}
where $I$ and $0$ represent $3 \times 3$ identity and zero matrices, respectively.  Then direct calculation yields $-Y_0^{-1}S_f^TS_rY_0 = \Lambda$. \\

Set $D = -B^{-1}S_rB$.  Then by definition of $W$ we have that $W = \Lambda D$.  Since $D^2 = \Lambda^2 = I$, then we know that $W^{-1} = D\Lambda$.  Since $B$ is symplectic, writing
\begin{equation*}
B =
\begin{bmatrix}
B_1 & B_2 \\
B_3 & B_4
\end{bmatrix}
\quad \text{ and } \quad
S_r =
\begin{bmatrix}
S & 0 \\
0 & -S
\end{bmatrix}
\end{equation*}
then the formula for the inverse of a symplectic matrix gives
\begin{equation*}
B^{-1} =
\begin{bmatrix}
B_4^T & -B_2^T \\
-B_3^T & B_1^T
\end{bmatrix}.
\end{equation*}
Directly computing $D$ gives

\begin{equation*}
D =
\begin{bmatrix}
K^T & L_1 \\
-L_2 & -K
\end{bmatrix}
\end{equation*}
with $K$, $L_1$, and $L_2$ defined up to sign by matrix multiplication.  Then
\begin{equation*}
W = \Lambda D =
\begin{bmatrix}
K^T & L_1 \\
L_2 & K
\end{bmatrix}
\quad \text{ and } \quad
W^{-1} = D \Lambda =
\begin{bmatrix}
K^T & -L_1 \\
-L_2 & K
\end{bmatrix}
\end{equation*}
and
$$\frac{1}{2}(W + W^{-1}) =
\begin{bmatrix}
K^T & 0 \\
0 & K
\end{bmatrix}$$
as required.
\end{proof}

As noted earlier, our coordinate system has already made use of the first integrals corresponding to center of mass, net momentum, and angular momentum in this setting.  There is an additional pair of eigenvalues $1$ in the monodromy matrix corresponding to the remaining first integral, the Hamiltonian itself.  These can be found, with eigenvector, as shown below. \\

\begin{lemma}
The matrix $K^T$ has a right eigenvector $[1 \ 0 \ 0]^T$ with corresponding  eigenvalue $1$.
\label{KEig}
\end{lemma}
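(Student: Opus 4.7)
The plan exploits the standard fact that the tangent vector $\dot\gamma(0)$ lies in the $+1$-eigenspace of the monodromy matrix $X(12\tau)$, and then transfers this to $W$ through the conjugation $X(12\tau) = Y_0 W^6 Y_0^{-1}$. The decisive refinement is to prove the sharper identity $W e_1 = e_1$, not merely $W^6 e_1 = e_1$; this is what pins down the first column of $K^T$.

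The first step is to show that $\dot\gamma(0)$ is a positive multiple of $e_1 = (1,0,0,0,0,0)^T$. A direct evaluation of Hamilton's equations at $\gamma(0) = (0,\alpha,\alpha,\sqrt{2},-\beta,\beta)^T$ gives $\dot Q_1(0) = 2\sqrt{2}\alpha^4$, while $\dot Q_2(0)$ and $\dot Q_3(0)$ vanish because both carry an explicit factor of $Q_1^2$. Every summand of $\Gamma$ apart from $P_1^2 Q_2^2 Q_3^2$ and $-2Q_2^2 Q_3^2$ contains a factor of $Q_1^2$, so its partial against $Q_i$ at $Q_1 = 0$ is zero; the two leftover summands contribute matching terms to $\partial\Gamma/\partial Q_2$ and $\partial\Gamma/\partial Q_3$ that cancel thanks to $P_1^2 = 2$, and $\partial\Gamma/\partial Q_1$ vanishes at $Q_1 = 0$ outright. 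Hence $\dot\gamma(0) = c e_1$ with $c = 2\sqrt{2}\alpha^4 > 0$.

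The second step chases $e_1$ through $W = Y_0^{-1} S_f^T S_r Y_0 B^{-1} S_r B$, where $B = Y(\tau) = X(\tau)Y_0$. The first column of $Y_0$ in Equation \ref{matrixy0} is $e_1$, so $Y_0 e_1 = e_1$ and $Y_0^{-1} e_1 = e_1$. Using $X(\tau)\dot\gamma(0) = \dot\gamma(\tau)$ gives $B e_1 = \dot\gamma(\tau)/c$. Differentiating the time-reversing symmetry $\gamma(-s + 2\tau) = S_r \gamma(s)$ at $s = \tau$ yields $S_r \dot\gamma(\tau) = -\dot\gamma(\tau)$, hence $S_r B e_1 = -B e_1$ and $B^{-1} S_r B e_1 = -e_1$. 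Applying the remaining factors one at a time, $Y_0(-e_1) = -e_1$, then $S_r(-e_1) = -e_2$ (first column of $S_r$), then $S_f^T(-e_2) = e_1$ (read off from the second row of $S_f$), and finally $Y_0^{-1} e_1 = e_1$. The overall conclusion is $We_1 = e_1$.

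With $W e_1 = e_1$ in hand, the block form $W = \begin{bmatrix} K^T & L_1 \\ L_2 & K \end{bmatrix}$ from the previous lemma decomposes the first column of $W$ as the first column of $K^T$ stacked over the first column of $L_2$. Matching this against $e_1$ gives precisely $K^T (1,0,0)^T = (1,0,0)^T$, the claimed eigenvector relation, together with $L_2(1,0,0)^T = 0$ as a byproduct reflecting the remaining degeneracy due to the Hamiltonian integral. The main bookkeeping hazard is the chain of sign conventions in $S_f$, $S_r$, and $Y_0$; I would manage this by applying each factor to a basis vector in turn rather than multiplying the six matrices out symbolically.
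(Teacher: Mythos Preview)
Your argument is correct and follows essentially the same strategy as the paper: both show $We_1=e_1$ by recognizing that $\dot\gamma(0)$ is a multiple of $e_1$ and tracking the tangent vector through the symmetry factors, then read off the first column of $K^T$ from the block form of $W$. The only tactical difference is that the paper first collapses $W$ to $Y_0^{T}S_f^{T}Y(2\tau)$ via Corollary~\ref{RobertsRemark2} and uses $S_f^{T}\gamma'(2\tau)=\gamma'(0)$, whereas you retain the full $B$-factorization and instead invoke $S_r\dot\gamma(\tau)=-\dot\gamma(\tau)$ at the midpoint; these are equivalent bookkeeping choices.
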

\begin{proof}
Let $v = Y_0^{-1}\gamma'(0)/||\gamma'(0)|| = Y_0^{T}\gamma'(0)/||\gamma'(0)||$.  Since $Y_0$ is orthogonal and $S_r$ is symmetric, we have
$$W = Y_0^{-1}S_f^TS_rY_0B^{-1}S_rB = Y_0^{T}S_f^TS_rY_0B^{-1}S_r^TB = Y_0^TS_f^TY(2\tau)$$
by Corollary \ref{RobertsRemark2} with $s = 0$. \\

Define $\gamma(s)$ to be the periodic orbit with initial conditions defined in Section \ref{secPeriodicOrbit}.  Since $\gamma'(s)$ is a solution to $\dot{\xi} = JD^2\Gamma(\gamma(s))\xi$ and $$\gamma'(0) = Y(0)Y_0^{-1}\gamma'(0) = Y(0)v,$$
then
$$\gamma'(s) = Y(s)Y_0^{-1}\gamma'(0) = Y(s)v.$$
This implies
$$Y_0^{-1}S_f^T\gamma'(2\tau) = Y_0^TS_f^TY(2\tau)v = Wv.$$
Since 
$$\gamma'(0) = (2\sqrt{2}\alpha^4, 0, 0, 0, 0, 0)$$
and
$$\gamma'(2\tau) = (0, -2\sqrt{2}\alpha^4, 0, 0, 0, 0)$$
with $\alpha$ as defined in Equation \ref{alphabeta}, we have
$$S_f^T\gamma'(2\tau) = \gamma'(0).$$
Then $$Wv = Y_0^{-1}S_f^T\gamma'(2\tau) = Y_0^TS_f^TS_f\gamma'(0) = Y_0^TY_0v = v.$$
So $v$ is an eigenvector of $W$ with eigenvalue $1$. \\

Since $\gamma'(0)$ is known, we have that $v = Y_0^{-1}e_1$, where
$$e_1 = [1 \ 0 \ 0 \ 0 \ 0 \ 0]^T.$$
Direct calculation gives that $v = e_1$.  Then, since $W$ satisfies the relation given in Lemma $\ref{Wform}$, $K^T$ must have eigenvector $[1 \ 0 \ 0]^T$ with eigenvalue $1$ as claimed.
\end{proof}

As a consequence, we know that the matrix $K$ must be of the form
\begin{equation}
K =
\begin{bmatrix}
1 & 0 & 0 \\
* & k_{22} & k_{23} \\
* & k_{32} & k_{33}
\end{bmatrix}
\end{equation}
and so the eigenvalues of the lower-right $2 \times 2$ block will determine stability.

\section{Stability Results}
\label{secStabRes}

Using the matrix $Y_0$ from Equation \ref{matrixy0}, we find the matrix $B = Y(\tau)$ numerically with the initial conditions from Equation \ref{alphabeta}. Then the matrix $K$ is given numerically by
\begin{equation*}
K = 
\begin{bmatrix}
1.0007 & 0.0004 & -0.0001 \\
-0.9038 & 0.3487 & 0.1926 \\
1.7654 & -1.1211 & -1.1241
\end{bmatrix}
\end{equation*}
The values given for the $k_{12}$ and $k_{13}$ entries are the result of propagation of numerical error in the calculation.  Assuming they are zero as proven earlier, 
the eigenvalues from the lower-right $2 \times 2$ block of $K$ are given by a simple application of the quadratic formula.  We find
$$\lambda_1 = 0.1836, \quad \lambda_2 = -0.95899$$
As a consequence of Lemma \ref{Wform}, we have the following
\begin{theorem}
The cubic orbit described throughout this paper is linearly stable.
\end{theorem}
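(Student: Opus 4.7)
The plan is to apply the symmetry-reduction framework assembled in Section \ref{subRoberts} directly. By that framework, the monodromy matrix satisfies $X(12\tau) = Y_0 W^6 Y_0^{-1}$, so its spectrum is the sixth power of the spectrum of $W$; and by Lemma \ref{Wform}, the eigenvalues of $W$ all lie on the unit circle if and only if the eigenvalues of the $3 \times 3$ block $K$ (extracted from $\tfrac{1}{2}(W+W^{-1})$) lie in the real interval $[-1,1]$. Linear stability therefore reduces to a single eigenvalue check on $K$.

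First I would numerically integrate the linearization $\dot{Y} = JD^2\Gamma(\gamma(s))Y$ on $[0,\tau]$ with initial condition $Y(0)=Y_0$ from Equation \ref{matrixy0}, using the orbit initial conditions from Equation \ref{alphabeta}, to obtain the symplectic matrix $B = Y(\tau)$. From $B$ I would assemble
$$W \;=\; Y_0^{-1}S_f^T S_r Y_0 B^{-1} S_r B,$$
read off $K$ from the lower-right block of $\tfrac{1}{2}(W+W^{-1})$, and record it numerically.

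Next, by Lemma \ref{KEig} the matrix $K^T$ carries the eigenvector $[1\ 0\ 0]^T$ with eigenvalue $1$, so $K$ has the lower-triangular-leading-row shape displayed at the end of Section \ref{subRoberts}, and the other two eigenvalues are precisely the roots of the characteristic polynomial of its lower-right $2 \times 2$ block, obtainable from the quadratic formula. With the computed values $\lambda_1 \approx 0.184$ and $\lambda_2 \approx -0.959$ both lying strictly inside $(-1,1)$, all three eigenvalues of $K$ lie in $[-1,1]$, so Lemma \ref{Wform} gives that every eigenvalue of $W$ (and hence of $W^6$) has unit modulus. The pair of eigenvalues equal to $1$ from $\lambda = 1 \in \text{spec}(K)$ accounts for the Hamiltonian first integral, while the pairs on the unit circle coming from $\lambda_1$ and $\lambda_2$ are simple and nonreal (since $\lambda_1, \lambda_2 \in (-1,1)$ with $\lambda_1 \ne \lambda_2$), which is exactly the definition of linear stability recalled in Section \ref{subStabDef}.

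The main obstacle is one of numerical rigor rather than conceptual difficulty: the integration of $Y$ to obtain $B$, and hence the eigenvalues $\lambda_1, \lambda_2$, are floating-point approximations, so to make the argument fully rigorous one would ideally bound the integration error (for instance by interval arithmetic) tightly enough to guarantee that the computed $\lambda_i$ remain strictly in $(-1,1)$ and distinct. The two numerical values above sit comfortably away from $\pm 1$ and from each other, so this margin is generous and the conclusion follows.
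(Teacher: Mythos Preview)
Your proposal is correct and follows essentially the same route as the paper: numerically compute $B=Y(\tau)$ with the given $Y_0$, form $K$, invoke Lemma~\ref{KEig} to isolate the lower-right $2\times 2$ block, and verify via the quadratic formula that its eigenvalues $\lambda_1\approx 0.184$, $\lambda_2\approx -0.959$ lie in $(-1,1)$, then conclude by Lemma~\ref{Wform}. If anything, you are slightly more explicit than the paper in checking the simplicity/multiplicity condition and in flagging the numerical-rigor caveat.
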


We seek to give evidence of higher-order stability of the cubic orbit.  Using $E = -1$ and the values of $\alpha$ and $\beta$ from Equation \ref{alphabeta}, we set
\begin{align*}
\gamma_0 = &(0, \alpha + r\cos(a)\cos(b), \alpha + r\cos(a)\sin(b), ... \\
&\sqrt{2}, \beta + r\sin(a)\cos(c), -\beta + r\sin(a)\sin(c))
\end{align*}
where
\begin{align*}
a, b, c &\in \{0, \pi/6, \pi/3, \pi/2, \ldots, 11\pi/6\} \\
r &\in \{0.005, 0.010, 0.015, \ldots, 0.100\}.
\end{align*}
The equations of motion are run up to 200 collisions at $Q_1 = 0$ for each possible combination of $a$, $b$, $c$, and $r$.  Integration is preemptively terminated after a time length of $s = 1$ has occurred since the last $Q_1 = 0$ collision.  This time cutoff value seems reasonable given the length of the period $12\tau = 0.124736$.  We track the distance from $\pm \gamma(0)$ at those collision times.  For all values of $r$ for which all 200 collisions were achieved on all values of $a$, $b$, and $c$ tested, the maximum distance from $\pm \gamma(0)$ at collision is given in the table below.
\begin{equation*}
\begin{array}{|c|c|}
\hline
r & \text{dist}_{max} \\
\hline
0.005 & 0.0391 \\
\hline
0.010 & 0.0824 \\
\hline
0.015 & 0.1289 \\
\hline
0.020 & 0.1833 \\
\hline
0.025 & 0.2574 \\
\hline
\end{array}
\end{equation*}
For all values of $r \geq 0.030$, there is at least one value of $a$, $b$, and $c$ for which fewer than 200 $Q_1 = 0$ instances occur.  For example, when $r = 0.030$ and $a = b = \pi/6$, $c = 11\pi/6$, only 34 instances of $Q_1 = 0$ are recorded before the integration is terminated, giving evidence of instability at this distance from the periodic orbit. \\

\section{Acknowledgements}

This section will be completed later.

\bibliographystyle{plain}

\bibliography{CubicNotes}

\end{document}